\DeclareMathAlphabet{\mathbb}{U}{msb}{m}{n}
  \def\corref#1{<#1>}%
\newtheorem{theorem}{Theorem}
\newtheorem{prop}{Proposition}
\theoremstyle{definition}
\newtheorem{definition}{Definition}
\theoremstyle{remark}
\newcolumntype{H}{>{\setbox0=\hbox\bgroup}c<{\egroup}@{}}
\definecolor{forest}{HTML}{0003c2}
\definecolor{lila}{HTML}{6e00c2}
\definecolor{cyan}{rgb}{0,0.5,.5}
\definecolor{dcyan}{rgb}{0,0.3,.3}
\definecolor{leaf}{HTML}{009e1d}
\begin{document}


\begin{frontmatter}

    \journal{TBD
    }

    \title{The Field Equations of Penalized non-Parametric Regression} 


    \author[1]{Sven Pappert
    \corref{cor1}}
    \ead{pappert@statistik.tu-dortmund.de}
    \cortext[cor1]{Corresponding author}
\address[1]{Chair of Econometrics, Department of Statistics, TU Dortmund University, Germany}

\begin{abstract}
We view penalized risks through the lens of the calculus of variations. We consider risks comprised of a fitness-term (e.g. MSE) and a gradient-based penalty. After establishing the Euler-Lagrange field equations as a systematic approach to finding minimizers of risks involving only first derivatives, we proceed to exemplify this approach to the MSE penalized by the integral over the squared $\ell^2$-norm of the gradient of the regression function. The minimizer of this risk is given as the solution to a second order inhomogeneous PDE, where the inhomogeneity is given as the conditional expectation of the target variable conditioned on the features. We discuss properties of the field equations and practical implications thereof, which also apply to the classical Ridge penalty for linear models, and embed our findings into the existing literature. In particular, we find that we can recover the Rudin-Osher-Fatemi model for image-denoising, if we consider the features as deterministic and evenly distributed. Last, we outline several directions for future research.
\end{abstract}
\begin{keyword}
Artificial Intelligence \sep Calculus of Variations \sep LASSO \sep Non-Parametric Regression \sep Penalized Regression \sep Regularization \sep Ridge Regression \sep Gradient-Based Regularization
\end{keyword}
\end{frontmatter}
\section{Introduction}
\noindent
When training a non-parametric model w.r.t. a penalized risk, it is clear that the model is not estimating the conditional expectation. But what is the model estimating? In this work we want to introduce a perspective on penalized risks in non-parametric regression problems that takes into account this question and offers a way to answer it systematically. Answering this question is crucial for understanding the nature of penalization. To motivate the beforementioned perspective, we begin by considering a standard example, namely that the conditional expectation minimizes the MSE risk.
\subsection{Motivation}
\label{Introduction:Motivation}
\noindent
Assume we observe pairs of $iid$ random variables $(Y_i,X_i)$ with $i \in \{1,\hdots,n\}, X_i \in \Omega \subset \mathbb{R}^{p}$ with $\Omega$ being compact and $Y_i \in \mathbb{R}$. We assume that there is a relation between the $Y_i$ and $X_i$, i.e. there is a mapping $g_0: \Omega \rightarrow \mathbb{R}$ such that $Y_i = g_0(X_i) + \varepsilon_i$, where $\varepsilon_i \sim (0, \sigma^2)$ is an idiosynchratic, elliptic and $iid$ error term. The goal of non-parametric regression is to estimate the function $g_0$. By $\hat{g}_n$ we denote the estimator for $g_0$ based on $n$ observations. The parameter space, $\Theta_n$, containing the parameters which parametrize $g_n$ can vary with $n$ in accordance to the discussion in \cite{stone1982optimal}. A sensible $\hat{g}_n$ ought to be chosen in such a way that the distance between $g$ and $\hat{g}$ is small. Specifying this distance amounts to specifying an (empirical) risk $\hat{Q}: \mathbb{R}^n\times\Omega^{n}\times H(\Omega) \rightarrow \mathbb{R}$, where $H(\Omega)$ is a suitable function-space on $\Omega$. Choosing $\hat{Q}$ as the (scaled) empirical MSE risk and plugging in the target observations, $\mathbf{Y} := (Y_1,\hdots,Y_n) \in \mathbb{R}^n$ and a function $g \in \mathcal{L}^2(\Omega, \mathbb{P}^X)$ evaluated at the feature observations, $\mathbf{X} =(X_1,\hdots,X_n) \in \Omega^{n}$, i.e. $g(\mathbf{X}) := (g(X_1),\hdots,g(X_n))$, we may write 
\begin{align*}
\hat{Q}(\mathbf{Y},\mathbf{X};g) = \frac{1}{n}\sum_{i = 1}^{n} (Y_i - g(X_i))^2.
\end{align*}
Note that under suitable regulatory conditions, $\hat{Q}(\mathbf{Y}, \mathbf{X}; g)$ is a consistent and unbiased estimator for the MSE risk:
\begin{align}
Q[g] = \mathbb{E}[\left(Y_1 - g(X_1)\right)^2] = \int (y - g(x))^2 f_{Y,X}(y,x) dy dx.
\label{Risk_MSE}
\end{align}
The aim of this work is to find the function that minimizes risks as the one in Eq.~\ref{Risk_MSE} and risks which are penalized by a functional of the gradient of $g$. In order to do so, we view the risk $Q$ as a functional of the regression function $g$ and search for the $\tilde{g}$ that minimizes the risk functional. A systematic approach for finding extrema of functionals is offered by the calculus of variations. In Sect.~\ref{Basics:VariationalCalculus} we introduce some basic concepts, and refer to \cite{rindler2018calculus} for a comprehensive exposition.
%
%
To give a simple illustration, we reiterate the result from Chapter 1.5.5 of \cite{bishop2006pattern}: The function minimizing the MSE-risk functional, $Q$ from Eq.~\ref{Risk_MSE}, is the mapping $x \mapsto \mathbb{E}(Y|X = x)$. The proof idea goes as follows: The first variation of $Q$ w.r.t. $g$ is derived and set to zero, $\delta Q[g] = 0$.\footnote{We suggest the readers who are not familiar with the calculus of variations to think of $\delta Q$ as the total differential of $Q$ w.r.t. a function instead of a variable for now. The differentiation w.r.t. a function is indicated by the '$\delta$' (instead of using a '$d$'.) In Sect.~\ref{Basics:VariationalCalculus}, we give a formal definition of the first variation.}
This yields an equation for $g$, whose solution is an extremum. The extremum actually being a minimum follows from the convexity of the MSE-risk. The complete derivation, which we adopt from \cite{bishop2006pattern} is as follows;
\begin{align}
\delta Q[g] =& \delta \int\int (y - g(x))^2 f_{Y,X}(y,x) dy dx \nonumber
\\
 =& \int\int 2(y - g(x)) \delta g(x) f_{Y,X}(y,x) dy dx \nonumber
\\
=& 2 \int \left[\int (y - g(x)) f_{Y,X}(y,x) dy\right] \delta g(x) dx \nonumber
\\
=& 2 \int \underbrace{\left[ \int y \frac{f_{Y,X}(y,x)}{f_{X}(x)} dy - g(x) \right]}_{= \mathbb{E}(Y|X = x) - g(x)} f_{X}(x) \delta g(x) dx. \nonumber
\end{align}
Setting $\delta Q[g] = 0$, the solution, denoted by $\tilde{g}$, is found as $\tilde{g}(x) = \mathbb{E}(Y|X = x)$ for all $x \in \Omega$.
Of course, this is not the only way to show that the conditional expectation minimizes the MSE-risk. However, this way offers a generalizable approach to systematically derive equations whose solutions are the minimizers of risk functionals, i.e. \textit{field equations}. This approach amounts to analyzing risks by understanding the risks as functionals of fields, and to find the minimum of the risk using the calculus of variations. The equations obtained by requiring $\delta Q[g] = 0$ are the field equations (the naming is in analogy to physics). A similar derivation as for the MSE-risk can be conducted for the MAE Risk. The results are given in the first two rows of Table~\ref{Table1}.\\
%
%
Notice that the unpenalized risk functionals in the first two rows of Table~\ref{Table1} depend only on the field $g$ itself. Given a certain penalty, e.g. a Ridge-penalty, the functional form of the penalty term on the underlying regression function
is not obvious, i.e. how would you write $\sum_i^p\beta_i^2$ in terms of the regression function $g$? It is also not obvious what the corresponding field equation and its respective solution is. To emphasize these points, the corresponding cells in the table are just question marks, (?1)-(?3), and the penalty is just written as $\widehat{\text{Pen}}[g]$.
The goal of this work is to fill these cells for the case where the penalty is gradient-based.\footnote{This already answers the question 'how would you write $\sum_i^p\beta_i^2$ in terms of the regression function $g$?' for linear $g$. The Ridge-penalty can be obtained by taking the integral of the squared $\ell^2$-norm of the gradient of $g$, $\int ||\nabla g(x)||_2^2 d\mathbb{P}^{X}(x) = \sum_{i = 1}^p \beta_i^2$. For non-linear $g$. We will further discuss this in Sect.~\ref{Basics:PenalizedRegression:NonParametric}.}
In the related literature different choices for the penalty are available. Taking the integral of the $\ell^1$ or $\ell^2$ norm of $\nabla g$ (or versions thereof) are popular choices, see e.g. \cite{drucker1991improving, drucker1992improving, rosasco2013nonparametric, follain2024nonparametric, ding2024semi} or \cite{shen2022consistency}. In Sect.~\ref{Basics:PenalizedRegression:NonParametric} we give an overview over the different approaches and distinguish between the respective risk and its empirical counterpart. The (?1)-cell will then be filled by what we will define as \textit{risk functional}, see Def.~\ref{Def:FirstRiskFunctional}.
In order to fill the (?2)-cell,we take the first variation of the risk functional. In particular, if the penalty only depends on the first derivative, the field equations are given by the Euler-Lagrange equations, cf. proposition~\ref{prop:FirstOrderRiskFunctionalFieldEqs} of this paper or \cite{rindler2018calculus}, Theorem 3.1. Filling the last cell of the table, (?3), amounts to solving partial differential equations (PDEs). For general cases no closed form solutions are available, however there are special cases where the resulting differential equation can be solved. Some of these are outlined in Sect.~\ref{FieldEqs:squared-ell2}. \\
\subsection{Contribution}
\noindent
The contribution of this paper can be structured in three parts. First, we define the risk functional in the non-parametric regression context and integrate gradient-based penalization from the related literature into this framework. Second, we derive the minimum of a first-order risk functional as the solution to the Euler-Lagrange equations associated to the risk functional. We investigate the special case where the gradient-based penalty is the integral of the squared $\ell^2$-norm of the gradient of the regression function. Third, we relate our framework to existing literature on image processing. In particular, we see that we obtain the Rudin-Osher-Fatemi (ROF) framework,  cf. \cite{rudin1992nonlinear}, under the condition that the features deterministic and evenly distributed. The ROF framework was introduced by the authors in order to remove noise from images, there the 'features' are just the places of a pixel. In our framework, we allow for random features following any (continuous) distribution. Consequently the joint distribution of the features influences the risk functional. 
\subsection{Overview}
\label{Introduction:Overview}
\noindent
In Sect.~\ref{Basics}, we gintroduce the (basic) concepts, which are necessary to understand this work. These include the calculus of variations, Sect.~\ref{Basics:VariationalCalculus}, as well as linear and non-parametric penalized regression, Sect.~\ref{Basics:PenalizedRegression:Linear} and Sect.~\ref{Basics:PenalizedRegression:NonParametric}, respectively. For the penalized non-parametric regression, we focus on gradient-based penalization. We give examples for gradient-based penalization from the related literature, including deep learning, non-parametric regression and image denoising, and embed them in our definition of a risk functional, Def.~\ref{Def:FirstRiskFunctional}. In Sect.~\ref{prop:FirstOrderRiskFunctionalFieldEqs} we derive the field equations for risk functionals, represented as the sum of a fitness-term and a gradient-based penalty only involving first derivatives. In Sect.~\ref{FieldEqs:squared-ell2}, we proceed to analyze the special case when the risk functional comprises the MSE-risk and the squared-$\ell^2$-penalty. This is the non-parametric equivalent to the Ridge penalty.
In Sect.~\ref{Discussion}, we discuss our findings, their restrictions, possible applications and further research questions.
%
\begin{table}[t]
\caption{Empirical risks, risks, their field equations and solutions}
\label{Table1}
\centering
\begin{tabular}{c|c|c|c|c}
\hline\noalign{\smallskip}
Name & empirical risk & risk & Field Eqs. & Solution
\\
\hline
MSE & $\frac{1}{n}\sum_{i=1}^{n}(y_i - g(\mathbf{x}_i))^2$ & $\mathbb{E}((Y - g(\mathbf{X}))^2)$ & $0 = \mathbb{E}(Y|\mathbf{X} = \mathbf{x}) - g(\mathbf{x})$ & $g = \mathbb{E}(Y|\mathbf{X})$
\\
\hline
MAE & $\frac{1}{n}\sum_{i=1}^{n}|y_i - g(\mathbf{x}_i)|$ & $\mathbb{E}(|Y - g(\mathbf{X})|)$ & $0 = g(\mathbf{x}) - F_{Y|\mathbf{X} = \mathbf{x}}^{-1}(0.5)$ & $g(\mathbf{x}) = F_{Y|\mathbf{X} = \mathbf{x}}^{-1}(0.5)$
\\
\hline
Penalized-MSE & \makecell{$\frac{1}{n}\sum_{i=1}^{n}(y_i - g(\mathbf{x}_i))^2$ \\ $ + \lambda \widehat{\text{Pen}}[g]$} & (?1) & (?2) & (?3)
\\
\hline 
\end{tabular}%
\end{table} 

\section{Basics}
\label{Basics}

\subsection{Calculus of Variations}
\label{Basics:VariationalCalculus}
\noindent
The goal of the calculus of variations is to search for minimizers of a functional in a predetermined space of functions. The space of functions considered in this paper is the Sobolev space on $\Omega \subset \mathbb{R}^p$. As is common, we denote by $W^{k,p}(\Omega)$ the Sobolev space, where $L^p$-integrability of up to the $k$-th derivative is required, cf. \cite{rindler2018calculus}, Appendix A5.
We note that integrability w.r.t. a non-singular measure $\mu$ is implied.\\
The functionals, whose minima we are interested in, are mappings from a Sobolev space to the real numbers, $\Gamma: W^{k,p}(\Omega) \rightarrow \mathbb{R}$. 
A major result of the calculus of variations are the well-known Euler-Lagrange equations, which we present in Thm.~\ref{Thm:EulerLagrangeEquations} and make use of in Sect.~\ref{Section:FieldEqsOfPenNonParRef}. Minimzers of functionals of the form $\Gamma[\phi] = \int f(x, \phi(x), \nabla \phi(x))$ are subject to the Euler-Lagrange equation. We state this well-known result in the following theorem and refer to Theorem 3.1 and Proposition 3.3 from \cite{rindler2018calculus} for a formulation with predetermined boundary conditions.\footnote{Theorem 3.1 assures that minimizers of $\Gamma$ are subject to the Euler-Lagrange equations while proposition 3.3 assures that solutions to the Euler-Lagrange equations are also minimizers of the functional.}
Alternatively, the first theorem in Sect. 2.2 from \cite{giaquinta1996calculus} may be referenced for a formulation without predetermined boundary conditions. Furthermore, we require regulatory conditions, which are given in \cite{rindler2018calculus}. For completeness, we restate them in~\ref{App:AssumptionsEulerLagrange}.
\begin{theorem}[Euler-Lagrange Equations]
\label{Thm:EulerLagrangeEquations}
Let $\Gamma: W^{2,p}(\Omega) \rightarrow \mathbb{R}$ be given as $\Gamma[\phi] = \int f(x, \phi(x), \nabla \phi(x)) dx$. Furthermore let Assumptions A1)-A3) from~\ref{App:AssumptionsEulerLagrange} be fulfilled. If $\phi \in  W^{1,p}(\Omega)$ minimizes $\Gamma$, then $\phi$ is subject to the following boundary problem
\begin{align}
\begin{cases}
0 = - \nabla \frac{\partial f(x, \phi(x), \nabla \phi(x))}{\partial \nabla \phi(x)} + \frac{\partial f}{\partial \phi(x)}(x, \phi(x), \nabla \phi(x)), & x \in \Omega,
\\
0 = \frac{\partial f}{\partial \nabla \phi} \cdot \hat{n}, & x \in \partial \Omega,
\end{cases}
\label{Eq:EulerLagrangeFields}
\end{align}
where $\hat{n}$ denotes the normal vector of $\Omega$. 
\end{theorem}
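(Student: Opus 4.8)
The plan is to follow the classical variational argument, adapted to the natural (free) boundary setting, since here no boundary values are prescribed. First I would introduce an admissible variation: for a test function $\eta \in W^{2,p}(\Omega)$ (crucially \emph{not} required to vanish on $\partial\Omega$) and $\varepsilon \in \mathbb{R}$, consider the perturbed competitor $\phi + \varepsilon \eta$ and define the scalar map $j(\varepsilon) := \Gamma[\phi + \varepsilon \eta]$. Because $\phi$ minimizes $\Gamma$, the function $j$ attains a minimum at $\varepsilon = 0$, so $j'(0) = 0$ whenever this derivative exists. The first substantive step is therefore to show that $j$ is differentiable and to compute $j'(0)$ by differentiating under the integral sign; this is precisely where Assumptions A1)--A3) enter, as they supply the continuity and growth conditions on $f$ and its partial derivatives needed to dominate the difference quotients and to legitimize interchanging differentiation and integration.

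Carrying out that differentiation via the chain rule yields the first variation
\[
\delta\Gamma[\phi](\eta) = \int_\Omega \left[ \frac{\partial f}{\partial \phi}\,\eta + \frac{\partial f}{\partial \nabla\phi} \cdot \nabla\eta \right] dx,
\]
where the partial derivatives are evaluated at $(x, \phi(x), \nabla\phi(x))$. The next step is to shift the derivative off of $\eta$ in the second term by integrating by parts (the divergence theorem), which is admissible because $\phi \in W^{2,p}(\Omega)$ renders $\nabla \cdot \bigl( \partial f / \partial \nabla\phi \bigr)$ a well-defined $L^p$ object and gives $\eta$ a meaningful trace on $\partial\Omega$. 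This produces
\[
\delta\Gamma[\phi](\eta) = \int_\Omega \left[ \frac{\partial f}{\partial \phi} - \nabla \cdot \frac{\partial f}{\partial \nabla\phi} \right] \eta \, dx + \int_{\partial\Omega} \left( \frac{\partial f}{\partial \nabla\phi} \cdot \hat{n} \right) \eta \, dS.
\]

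The argument then closes in two stages by exploiting the arbitrariness of $\eta$. First I would restrict to $\eta \in C_c^\infty(\Omega)$, which vanishes near $\partial\Omega$ and so annihilates the boundary integral; requiring the remaining interior integral to vanish for all such $\eta$ and invoking the fundamental lemma of the calculus of variations (du Bois--Reymond) forces the bracketed integrand to vanish almost everywhere on $\Omega$, which is exactly the interior Euler--Lagrange equation. With that interior term now identically zero, I would return to general $\eta$ with nonvanishing trace on $\partial\Omega$; the surviving boundary integral must then vanish for every admissible boundary value, yielding the natural boundary condition $\frac{\partial f}{\partial \nabla\phi} \cdot \hat{n} = 0$ on $\partial\Omega$.

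The main obstacle I anticipate is analytic rather than structural: rigorously justifying the differentiation under the integral sign and the integration by parts at the stated Sobolev regularity. In particular, one must confirm that the difference quotients $\varepsilon^{-1}\bigl(f(x,\phi+\varepsilon\eta, \nabla\phi + \varepsilon\nabla\eta) - f(x,\phi,\nabla\phi)\bigr)$ admit an integrable majorant uniformly for small $\varepsilon$, which is what the growth and coercivity hypotheses A1)--A3) are tailored to guarantee, and one must ensure the boundary trace and normal component in the second stage are meaningful, which motivates stating the result with the extra derivative afforded by $W^{2,p}(\Omega)$. Once these regularity points are secured, the remainder reduces to the routine chain-rule computation and the fundamental-lemma machinery.
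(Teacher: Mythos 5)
Your proposal is correct and follows exactly the classical argument that the paper itself relies on: the paper does not prove Theorem~\ref{Thm:EulerLagrangeEquations} but cites it from \cite{rindler2018calculus} (Theorem 3.1) and \cite{giaquinta1996calculus} (Sect.~2.2/2.4 for the natural boundary conditions), and your two-stage scheme---first variation via $j(\varepsilon)=\Gamma[\phi+\varepsilon\eta]$ justified by the growth bounds A1)--A3), integration by parts, the fundamental lemma with $\eta \in C_c^\infty(\Omega)$ for the interior equation, then general $\eta$ with nonvanishing trace to extract $\frac{\partial f}{\partial \nabla \phi}\cdot \hat{n} = 0$ on $\partial\Omega$---is precisely the standard derivation in those references. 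You also correctly identify the genuine analytic work (dominating the difference quotients and making sense of the trace and of $\nabla \cdot \frac{\partial f}{\partial \nabla\phi}$), which is where the $W^{2,p}$ hypothesis and the growth conditions are consumed, so there is no gap to flag.
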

\textbf{Remarks:}
\begin{itemize}
\item[a)] In Theorem 3.1 from \cite{rindler2018calculus}, the boundary condition is different from the one we give here. The reason for this is that we do not have requirements on $\phi$ on the boundary ab initio. Instead, we adopt the \textit{natural boundary conditions}, cf. \cite{giaquinta1996calculus}, Sect. 2.4.
\item[b)] Theorem~\ref{Thm:EulerLagrangeEquations} does not assure that the solution to the Euler-Lagrange equation is a minimum. The conditions under which a solution to the Euler-Lagrange equation is a minimum are part of Prop. 3.3 from \cite{rindler2018calculus}. For completeness, we restate this proposition as Prop.~\ref{prop:ConditionsELSolutionMinimum} below.
\end{itemize}
\begin{prop}[Conditions for solution to the E.-L.-Eqs. to be a minimum, from \cite{rindler2018calculus}, Prop. 3.3]
\label{prop:ConditionsELSolutionMinimum}
Let $\Gamma: W^{2,p}(\Omega) \rightarrow \mathbb{R}$ be given as $\Gamma[\phi] = \int f(x, \phi(x), \nabla \phi(x)) dx$. If the following conditions hold, the solution to the Euler-Lagrange equation of $\Gamma$ is a minimum of $\Gamma$: Let $x \in \Omega$, $s \in \mathbb{R}$ and $u \in \mathbb{R}^p$.
\begin{itemize}
\item[i)] $|\frac{\partial}{\partial_s} f(x,s,u)| \leq C (1 + |s|^{p-1} + ||u||^{p-1})$ and $||\frac{\partial}{\partial_u} f(x,s,u)|| \leq C (1 + |s|^{p-1} + ||u||^{p-1})$ for $C > 0$ and $p > 1$.
\item[ii)] For fixed $x$, $f(x,s,u)$ is convex in $(s,u)$.
\end{itemize}
\end{prop}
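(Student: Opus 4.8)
The plan is to exploit convexity: condition (ii) renders the integrand, and hence $\Gamma$ itself, convex, so any critical point is automatically a global minimizer. Write $\phi_0 \in W^{1,p}(\Omega)$ for a solution of the Euler-Lagrange boundary problem in Eq.~\ref{Eq:EulerLagrangeFields} and let $\phi \in W^{1,p}(\Omega)$ be an arbitrary competitor. Since $f(x,\cdot,\cdot)$ is convex and (by condition (i)) differentiable in $(s,u)$ for each fixed $x$, the supporting-hyperplane inequality for differentiable convex functions gives, pointwise in $x$,
\begin{align*}
f(x,\phi,\nabla\phi) \geq f(x,\phi_0,\nabla\phi_0) + \partial_s f(x,\phi_0,\nabla\phi_0)\,(\phi - \phi_0) + \partial_u f(x,\phi_0,\nabla\phi_0)\cdot(\nabla\phi - \nabla\phi_0),
\end{align*}
where $\partial_s f$ and $\partial_u f$ denote the partial derivatives of $f$ in its second and third slots.

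First I would integrate this inequality over $\Omega$, obtaining $\Gamma[\phi] \geq \Gamma[\phi_0] + R$, where $R$ collects the two linear terms. Then I would integrate the gradient term by parts, moving the divergence onto $\partial_u f$ and producing a boundary integral over $\partial\Omega$. The natural boundary condition $\frac{\partial f}{\partial \nabla\phi}\cdot\hat{n} = 0$ satisfied by $\phi_0$ annihilates the boundary integral, while the interior equation $\frac{\partial f}{\partial\phi} = \nabla\cdot\frac{\partial f}{\partial\nabla\phi}$ makes the remaining volume integrand vanish identically. Hence $R = 0$ and $\Gamma[\phi] \geq \Gamma[\phi_0]$ for every admissible $\phi$, which is exactly the assertion.

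The role of the growth condition (i) is to make this formal manipulation rigorous. For $\phi,\phi_0 \in W^{1,p}(\Omega)$, the bounds $|\partial_s f|, \|\partial_u f\| \leq C(1 + |s|^{p-1} + \|u\|^{p-1})$ place the coefficients $\partial_s f(\cdot,\phi_0,\nabla\phi_0)$ and $\partial_u f(\cdot,\phi_0,\nabla\phi_0)$ in $L^{p'}(\Omega)$ with $p' = p/(p-1)$, so that by Hölder's inequality each product against $(\phi-\phi_0)\in L^p$ or $(\nabla\phi-\nabla\phi_0)\in L^p$ is integrable. This guarantees that $R$ is a well-defined finite number, that $\Gamma$ is finite on $W^{1,p}(\Omega)$, and that the linearization used above is the genuine first variation.

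I expect the main obstacle to be this last, analytic point: justifying the integration by parts and the vanishing of the boundary term in the weak Sobolev setting rather than merely for smooth $\phi$. One must argue that $\frac{\partial f}{\partial\nabla\phi}$ evaluated along $\phi_0$ is regular enough (invoking the $W^{2,p}$ regularity of the ambient space and the growth bounds) for the divergence theorem to apply, and interpret the natural boundary condition in the appropriate trace sense. By contrast, the pointwise convexity inequality is elementary, and once integrability and the boundary cancellation are secured, the final step via the interior Euler-Lagrange equation is immediate.
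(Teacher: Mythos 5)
Your argument is correct in substance, but note that the paper itself offers no proof of this proposition at all: it is restated verbatim from the cited source (Rindler 2018, Prop.\ 3.3) ``for completeness,'' so the only proof to compare against is the one in that source --- and yours is essentially it. The key idea is identical: joint convexity of $f(x,\cdot,\cdot)$ gives the supporting-hyperplane inequality, integration gives $\Gamma[\phi]\geq\Gamma[\phi_0]+R$, and the Euler--Lagrange equation forces $R=0$. The one genuine difference lies in how the Euler--Lagrange equation is invoked. You use the strong (pointwise) form together with the natural boundary condition, and must therefore integrate by parts --- which, as you correctly flag as your ``main obstacle,'' requires that $\partial_u f(\cdot,\phi_0,\nabla\phi_0)$ possess a weak divergence and a normal trace for which the Gauss--Green formula holds against $W^{1,p}$ competitors. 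The standard route (and Rindler's) sidesteps this entirely by interpreting ``solution of the Euler--Lagrange equation'' in the weak sense:
\begin{align*}
\int_{\Omega}\Bigl[\partial_s f(x,\phi_0,\nabla\phi_0)\,\psi + \partial_u f(x,\phi_0,\nabla\phi_0)\cdot\nabla\psi\Bigr]\,dx = 0 \quad \text{for all } \psi \in W^{1,p}(\Omega),
\end{align*}
where admitting test functions that do not vanish on $\partial\Omega$ is precisely what encodes the natural boundary condition. Taking $\psi=\phi-\phi_0$ makes your remainder $R$ vanish by definition --- no integration by parts, no trace theory, no extra regularity of $\phi_0$. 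Your H\"older/growth-bound argument is still needed, exactly as you deploy it, to guarantee that this weak formulation and the integrated convexity inequality are finite and meaningful. So: same key mechanism, but working with the weak form turns the analytic difficulty you identify into a non-issue, which is why the cited proposition is stated for $W^{1,p}$ solutions without any $W^{2,p}$-type hypotheses on the minimizer.
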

We note that the proposition in \cite{rindler2018calculus} also holds for vector-valued fields. Since it is sufficient for our case, we only gave the one-dimensional version of the proposition. The $x, s$ and $u$ are placeholders for $x, \phi(x), \nabla \phi(x)$. \cite{rindler2018calculus} named these placeholders $x,v$ and $A$, respectively.\\
The Euler-Lagrange equations can be derived by taking the first variation of the functional of interest. This is also what we do in the motivation of this paper, Sect.~\ref{Introduction:Motivation}. The first variation of a functional is defined as
\begin{align}
\delta \Gamma[\phi] = \lim_{\varepsilon \rightarrow 0} \frac{ \Gamma[\phi + \varepsilon \eta] - \Gamma[\phi]}{\varepsilon}.
\end{align}
where $\eta$ is a test function on which we enforce the same assumptions as on $\phi$ \citep[cf.][Eq. 3.1]{rindler2018calculus}.
This expression is similar to the usual differential quotient. The difference is that it is not a function whose value changes by changing the argument by a small amount, rather it is a functional which is changed by changing its argument by adding a test function with a small coefficient. The Euler-Lagrange equations are the result of setting
\begin{align*}
\delta \Gamma[\phi] = 0
\end{align*}
for $\Gamma$ as in Theorem.~\ref{Thm:EulerLagrangeEquations}. This procedure is analogous to setting the derivative (or the gradient) of a function to zero to find its extrema.
Now, the other main concept of this paper -- Penalized Regression -- is introduced. 
\subsection{Penalized Regression}
\label{Basics:PenalizedRegression}
\noindent
Let $(Y_i,\mathbf{X}_i), i \in \{1,\hdots,n\}$ be pairs of $iid$ and integrable random variables/vectors. $Y_i \in \mathbb{R}$ is the (continuous) target variable, while $\mathbf{X}_i \in \Omega \subset \mathbb{R}^p$ are (continuous) features. It is assumed that there exist square-integrable, $iid$ and elliptic random variables $\varepsilon_i \sim (0, \sigma^2)$ and a function $g_0:\Omega \rightarrow \mathbb{R}$ s.t. we may write $Y_i = g_0(\mathbf{X}_i) + \varepsilon_i$. Now we give a brief to penalized linear regression, followed by the the non-parametric case afterwards.
\subsubsection{Penalized Regression for Linear Models}
\label{Basics:PenalizedRegression:Linear}
\noindent
Assuming that $g_0$ is linear, i.e. we may write $g_0(x) = \gamma_0 + \sum_{i = 1}^p \gamma_i x_i, \ \forall x \in \Omega$, we take the regression function $g$ to be linear as well, $g(x) = \beta_0 + \sum_{i = 1}^p \beta_i x_i$. Then we can write the penalized empirical scaled MSE risk, evaluated at observations $\mathbf{Y}=(Y_1,\hdots,Y_n)$ and $\mathbf{X} = (\mathbf{X}_1,\hdots,\mathbf{X}_n)$, respectively, and parameters $\beta$ as
\begin{align}
\hat{Q}(\mathbf{Y}, \mathbf{X}; \beta) = \frac{1}{n} \sum_{i = 1}^n \left(Y_i - (\beta_0 + \sum_{j = 1}^p \beta_i X_{ij})\right)^2 + \lambda \widehat{\text{Pen}}(\beta) = \frac{1}{n} (\mathbf{Y} - \mathbf{X}\beta)^2 + \lambda \widehat{\text{Pen}}(\beta).
\label{LinearPenalizedMSE}
\end{align}
where $\lambda > 0$ is the penalization parameter and $\widehat{\text{Pen}}(\beta)$ is a suitable penalty, e.g. the Ridge-penalty, $\widehat{\text{Pen}}(\beta) = \sum_{i = 1}^p \beta_i^2$. The idea is to penalize deviations from constancy s.t. simplicity is favored. 
Notice that the penalty depends on a vector of parameters, $\beta$, and not on a field as in the considerations for non-parametric regression in the introduction (and in the next subsection, Sect.~\ref{Basics:PenalizedRegression:NonParametric}). The theoretical counterpart to Eq.~\ref{LinearPenalizedMSE}, i.e. the penalized MSE-risk for linear functions, evaluated at parameters $\beta = (\beta_0, \hdots, \beta_p)$, is given as
\begin{align*}
Q(\beta) = \mathbb{E}\left(Y_1 - (\beta_0 + \sum_{j = 1}^p \beta_i X_{1j})^2\right) + \lambda \widehat{\text{Pen}}(\beta).
\end{align*}
Since $\widehat{\text{Pen}}(\beta)$ only depends on deterministic $\beta$, the penalty takes the same form in the empirical and the non-empirical risk. This is a major difference to non-parametric penalization.
Depending on the choice of the penalty, the empirical risk and the respective estimators have different properties. We discuss these properties in the following and thereby introduce different penalties. \\
We start with the case where no penalization is present, i.e. $\lambda\widehat{\text{Pen}}(\beta) = 0, \forall \beta \in \mathbb{R}^{p}$. With no penalization, we are in the OLS setting and under regularity conditions, the Gauss-Markov theorem assures that the OLS estimator, $\hat{\beta}^{\text{OLS}} = (\mathbf{X}^T\mathbf{X})^{-1}\mathbf{X}^T \mathbf{Y}$, is the best linear unbiased estimator, called 'BLUE'.\footnote{See Chapter 1.3 of \cite{{hayashi2011econometrics}} for the Gauss-Markov theorem and the BLUE property}.
While theoretically being the best estimator, the OLS-estimator has some problems in certain set-ups. First, \cite{hoerl1970ridge} point out that, if the features are highly correlated, the estimator is not stable, i.e. it has a high variance. Second, \cite{tibshirani1996regression} also points out that the OLS estimator is subject to high variance and that the estimator suffers from estimates deviating from their true value, especially zero, just by chance. These problems become especially relevant in high-dimensional settings. Attempts to overcome these problems lead us to the Ridge and LASSO regularization methods, discussed next.\\
\cite{hoerl1970ridge} introduce the Ridge estimator as $\beta^{\text{Ridge}} = (\mathbf{X}^T\mathbf{X} + \lambda I_{p\times p})^{-1} \mathbf{X}^T \mathbf{Y}$. The BLUE property of the OLS-estimator is sacrificed in order to obtain a more stable estimator. The relation of $\hat{\beta}^{\text{Ridge}}$ to the empirical loss in Eq.~\ref{LinearPenalizedMSE} is that $\hat{\beta}^{\text{Ridge}}$ minimizes $\hat{Q}$ when $\widehat{\text{Pen}}(\beta)$ is chosen as $\widehat{\text{Pen}}(\beta) = \sum_{i =1}^p \beta_i^2$ \citep[see Chapter 3.4.1 of][]{hastie2017elements}. Consequently, the term $\sum_{i =1}^p \beta_i^2$ is called Ridge-penalty. An interesting property of the Ridge estimator is that $\mathbb{P}(\beta^{\text{Ridge}}_i = 0) = 0$, i.e. the probability that parameters are shrinked to zero exactly is zero.
To this end, the LASSO (least absolute shrinkage and selection operator) penalty was introduced by \cite{tibshirani1996regression}. Using the LASSO, shrinking parameters to exactly zero becomes possible. The LASSO is given as $\widehat{\text{Pen}}(\beta) = \sum_{i = 1}^p |\beta_i|$. Due to the absolute value function not being differentiable at $0$, we will not analyze penalties of this form in the current framework. However, in the discussion, Sect.~\ref{Discussion:OtherPenalties}, we pick up penalties involving absolute values again and sketch a way to find minima of the corresponding risk functionals.
\subsubsection{Gradient-based Penalization in non-Parametric Regression}
\label{Basics:PenalizedRegression:NonParametric}
\noindent
Before discussing approaches from the gradient-based penalization literature, we want to give a general form of an (empirical) risk which is to be analyzed by the calculus of variations. Later in this section we will embed these approaches into our general framework. The risk functional we are interested in is the sum of two terms; the first term assesses a model's fit, while the second term is the penalty term.
We denote the fitness term by $L$ and the penalty by $\text{Pen}$, with their empirical counterparts being $\hat{L}$ and $\widehat{\text{Pen}}$, respectively.
Examples for $L$ are the MSE, (log-)Likelihood, MAE or the Energy Distance. Examples for $\text{Pen}$ are integrals of $\ell^2$ or $\ell^1$ norms (or versions thereof) of the gradient. 
\begin{definition}[Risk Functional]
\label{Def:FirstRiskFunctional}
Let $\widehat{\text{Pen}}: W^{k,p}(\Omega) \times \Omega \rightarrow \mathbb{R}$ and $\hat{L}:W^{k,p}(\Omega) \times \Omega \times \mathbb{R} \rightarrow \mathbb{R}$. The empirical risk functional evaluated at $\mathbf{Y} \in \mathbb{R}$, $\mathbf{X}\in \Omega$ and $g \in W^{k,p}(\Omega)$, is given as
\begin{align}
\hat{Q}(\mathbf{Y}, \mathbf{X}, g) = \frac{1}{n}\sum_{i=1}^n \hat{L}(Y_i,g(X_i)) + \lambda\widehat{\text{Pen}}(g, \mathbf{X}).
\end{align}
The penalty may include partial derivatives of $g$ up to order $(k-1)$. The fitness term may only include $g$ itself.
The theoretical counterpart to the empirical risk functional, evaluated at $g \in W^{k,p}(\Omega)$, is given as $Q: W^{k,p}(\Omega) \rightarrow \mathbb{R}$ such that
\begin{align}
Q[g] = \int \hat{L}(y, g(x)) d\mathbb{P}^{(X,Y)}(x,y) + \lambda \int \widehat{\text{Pen}}(g, x) d\nu(x)
\label{RiskFunctional}
\end{align}
where $\mathbb{P}^{(X,Y)}$ is the image-measure of the random vector $(X,Y)$ and $\nu$ is a measure on $(\Omega, \mathcal{B}(\Omega))$. Both measures are assumed to be $\sigma$-finite measures which are absolutely continuous w.r.t. the Lebesgue-measure on the respective space. The Radon-Nikodym Lebesgue densities may be called $f_{X,Y}$ and $f_{\nu}$.
\end{definition}
\noindent
\subsubsection*{Remarks 1}  
\begin{itemize}
\item[a)] The measure $\nu$ does not need to be a probability measure nor does it need to be the measure associated to the random variable $X$.
\item[b)] Many gradient-based penalization approaches from the literature are related to the risk functional we define here. Def.~\ref{Def:FirstRiskFunctional} is an attempt to 1) generalize risks with gradient-based penalties in such a way that popular approaches from the literature are included and 2) have the risk in a form which allows for analysis by variational calculus.
\item[d)] Note that $g \in W^{k,p}$ and that we allow up to $(k-1)$-order partial derivatives in the penalty term. This is due to the need for integration by parts when taking the first variation of the penalty. Furthermore, $g \in W^{k,p}$ ensures $L^p$-integrability w.r.t. the Lebesgue-measure of up to the $k$-th derivative. In the risk functional, we integrate w.r.t. $\sigma$-finite measures which are absolutely continous w.r.t. the Lebesgue-measure. The existence of the integrals is guaranteed by the Lebesgue-integrability of $g$.
\end{itemize}
\noindent
Now we proceed to give examples gradient-based penalization from the literature and, when possible, embed them into the risk functional framework from Def.~\ref{Def:FirstRiskFunctional}.\\
First, we consider $\ell^2$-based penalties and start with the squared-$\ell^2$-penalty:
\begin{align}
\widehat{\text{Pen}}(\nabla g) =& \frac{1}{n} \sum_{i = 1}^n ||\nabla g(X_i)||_2^2,
\\
\text{Pen}(\nabla g(X_1)) =& \int ||\nabla g(x)||_2^2 d\nu,
\label{NonParRidgePenalty}
\end{align}
respectively.
This penalty is sometimes called 'Sobolev-regularization' (see e.g. \cite{rosasco2013nonparametric}, remark 1; \cite{ding2024semi}, Sect. 1). \cite{ding2024semi} also consider a penalty of the form $\frac{1}{n} \sum_{i = 1}^n ||\nabla g(Z_i)||_2^2$, where $Z_i$ are $iid$ random variables on $\Omega$ which do not have to correspond to the measure associated to $X$ (see Eq. 3 of their paper). This of course amounts to specifying the measure $\nu$ in Eq.~\ref{NonParRidgePenalty} accordingly.
When $g$ is linear, i.e. we may write $g(x) = \beta_0 + \sum_{i = 1}^p \beta_i x_i, \ \forall x \in \Omega$, then $\widehat{\text{Pen}}(\nabla g) = \text{Pen}(\nabla g(X_1)) = \sum_{i = 1}^p \beta_i^2$. This is the classical ridge penalty corresponding to the ridge estimator from \cite{hoerl1970ridge}, compare also Sect.~\ref{Basics:PenalizedRegression:Linear}. The embedding into the risk functional framework from Def.~\ref{Def:FirstRiskFunctional} is obvious: $\text{Pen}:W^{2,2} \rightarrow \mathbb{R}$ is only dependent on first-order partial derivatives. Risks of such form may be called \textit{First-order risk functionals}. The penalty in smoothing spline models is also of squared-$\ell^2$-form. There, the form of the penalty is $\int_{\Omega} (g^{(m)}(x))^2 dx$, where $g^{(m)}$ denotes the $m$-th order derivative, cf. \cite{wahba1990spline}, Chapter 1. Choosing $\widehat{\text{Pen}}(g,X) = g^{(m)}$ and $\nu = \lambda$, we may embed the smoothing spline risk into the risk functional framework too. Risks with $m$-th order derivatives may be called \textit{$m$-th-order risk functionals}. Additionally, we recognize that the empirical fitness term for a smoothing spline is $\frac{1}{n} \sum_{i = 1}^n (Y_i - g(x_i))^2$. Only $\{Y_i\}_{i = 1}^n$ are considered random variables, while $\{x_i\}_{i = 1}^n$ are considered to be deterministic. Accordingly, the image-measure $\mathbb{P}^{(X,Y)}$ in Eq.~\ref{RiskFunctional} can be written as $\mathbb{P}^{(X,Y)} =\frac{\#^{p}}{n} \otimes \mathbb{P}_Y$ to recover the smoothing spline risk.
\\
A related penalty is introduced in \cite{drucker1991improving, drucker1992improving}. There, the 'backward energy function', $\hat{E}_b = \sum_{i = 1}^n ||\nabla \left( \frac{1}{2} (Y_i - g(X_i))^2\right)||_2^2$, is used as penalty in the context of artificial neural networks (ANNs). The authors introduce double-backpropagation to calculate $\hat{E}_b$ efficiently when $g$ is an ANN. The backward energy function can be embedded into the risk functional framework by writing $E_b$ = $\int ||\frac{1}{2}\nabla (y - g(x))^2||_2^2 d\mathbb{P}^{(X,Y)}(x,y)$ and $\text{Pen}: W^{2,2} \rightarrow \mathbb{R}$ with $\text{Pen} = E_b$. Then, the penalty depends $g$ and its first partial derivatives.\footnote{In the original paper, the notation is different and the risk and penalty are given for multiple output networks. We adjusted the notation and dimensionality to fit in the context of this paper.}
All before mentioned penalties are squared $\ell^2$-based penalties. Without squaring, the penalty involves the euclidean norm. Such penalties are popular in image denoising. In the seminal paper by \cite{rudin1992nonlinear}, the authors use the penalty $\int ||g(\mathbf{x})||_2 d\mathbf{x}$.\footnote{In signal denoising the fidelity term has a different structure and does not include a density. In the paper, the authors}
In the context of signal-denoising this penalty is often called 'total variation'; amongst others it is used in \cite{torres2014total} for gravitational wave denoising. Similar to the penalization in the spline risk, the penalty does not involve random variables. 
\\
%
Next, we discuss the $\ell^1$-penalty:
\begin{align}
\widehat{\text{Pen}}(\nabla g, \mathbf{X}) =& \frac{1}{n}\sum_{i = 1}^n||\nabla g(X_i)||_1,
\\
\text{Pen}(\nabla g) =& \int ||\nabla g||_1 d \nu.
\end{align}
A penalty of this form is used in \cite{shen2022consistency} for penalization of ANNs with the rectified linear unit (ReLu) activation function. It is clear that for a linear function $g$, the classical LASSO penalty from \cite{tibshirani1996regression} is recovered. For the case where $g$ is parametrized as a GAMLSS, the penalty from \cite{groll2019lasso} is recovered. \\
Last, we want to introduce the RKHS \& gradient-based penalization by \cite{rosasco2013nonparametric}, which is introduced for general purpose regularization. In the paper, the authors analyze an empirical risk of the following form:
\begin{align*}
\frac{1}{n}\sum_{i = 1}^n (Y_i - g(X_i))^2 + \lambda\left(2 \sum_{a = 1}^p \bigg|\bigg| \frac{\partial g}{\partial x_a}\bigg|\bigg|_n + v ||g||_{\mathcal{H}}^2\right).
\end{align*}
The first part of the empirical risk is just the usual MSE. The second part, the penalty, consists of a derivative based term, $\sum_{a = 1}^p \big|\big| \frac{\partial g}{\partial x^a}\big|\big|_n = \sum_{a = i}^p \sqrt{\frac{1}{n} \sum_{i = 1}^n \big(\frac{\partial g(X_i)}{\partial x_a}\big)^2} $, and a RKHS-penalty, $v ||f||_{\mathcal{H}}^2$. The former is based on squared derivatives and shares resemblance with the group LASSO, see \cite{yuan2006model} for the group LASSO. The term $v ||g||_{\mathcal{H}}^2$ is a norm on the Hilbert space consisting of functions which fulfill the reproducing property w.r.t. a pre-determined kernel $K$ (see \cite{rosasco2013nonparametric}, Sect. 4.) The norm depends on the explicit choice of the function space.
The non-empirical risk can be obtained by replacing the MSE by the expected MSE and $\frac{1}{n}\sum_{i = 1}^n \left(\frac{\partial g(X_i)}{\partial x_a}\right)^2$ by $\mathbb{E}[(\frac{\partial g}{\partial x_a}(X))^2]$, given that the corresponding conditions for convergence are fulfilled. The embedding into the risk functional framework depends on the explicit choice of the kernel $K$. In its general form, the risk can not be embedded into the risk functional.\\
Another gradient-based penalization approach, which can not be represented as a risk functional is the one by \cite{follain2024nonparametric}. There, an $\ell^2$-type penalty is used but the authors focus on Hermite polynomials and introduce coefficients for each summand of the Hermite polynomial series. The introduction of the coefficient hinders the analysis with our approach.   
Of course, there are much more regularization approaches available in the literature and this is not an exhaustive discussion. Here, we merely introduced what we think are the basic building blocks of gradient-based penalization. In the next section, Sect.~\ref{Section:FieldEqsOfPenNonParRef}, we proceed to derive the field equations of the a first-order risk functional.
\section{Field equations of Penalized Non-Parametric Regression}
\label{Section:FieldEqsOfPenNonParRef}
\noindent
Assume the set-up from above, i.e. a non-parametric regression setting with a risk penalized by a gradient-based penalty. In this section we analyze penalized risks by means of the calculus of variations. We start by proposing the general field equations for first-order risk functionals, i.e. penalties only involving differentiable mappings of the first gradient. Then we proceed by explicitly stating the field equations of the squared-$\ell^2$ penalized MSE-risk.
\begin{prop}
\label{prop:FirstOrderRiskFunctionalFieldEqs}
Let $(Y_i,X_i)_{i = 1}^n$ be $iid$ pairs of random variables and $Y_i \in \mathbb{R}$, $X_i \in \Omega \subset \mathbb{R}^p$. Let $\mathbb{P}^X$ be the image-measure associated to the random variables $X_i$, $\mathbb{P}^Y$ the image measure associated to the random variables $Y_i$ and $\mathbb{P}^{(X,Y)}$ their joint image-measure. Furthermore, let $\nu$ be a measure on $(\Omega, \mathcal{B}(\Omega))$ s.t. $\text{Pen}(\nabla g) \in \mathcal{L}(\Omega, \nu)$. All measures are assumed to be $\sigma$-finite and absolutely continuous w.r.t. the respective Lebesgue-measure on the space and their Lebesgue-densities are denoted as $f_X$, $f_Y$, $f_{X,Y}$ and $f_{\nu}$, respectively. Let $g \in W^{2,p}(\Omega)$ and let $Q$ be a risk functional of the form
\begin{align}
Q[g] = \int L[y; g(x)]d\mathbb{P}^{(X,Y)}  + \lambda \int \text{Pen}[\nabla g] d\nu. 
\end{align}
Furthermore, let assumptions (A1)-(A5) from~\ref{App:AssumptionsEulerLagrange} hold. Then $Q$ is minimized by the solution to the following differential equation (Euler-Lagrange equation):
\begin{align}
0 = \int \frac{\partial L(x, y, g)}{\partial g} f_{X,Y}(x,y) dy - \lambda \, \nabla \left[ \frac{\partial\text{Pen}}{\partial\nabla g}[\nabla g(x)] f_{\nu}(x) \right],
\label{Eq:FirstOrderRiskFunctionalFieldEqs}
\end{align}
subject to boundary conditions: $\frac{\partial \text{Pen}}{\partial \hat{\mathbf{n}}}[g] = 0$ on $\partial \Omega$, where $\frac{\partial}{\partial \hat{\mathbf{n}}}$ denotes the normal derivative.
\end{prop}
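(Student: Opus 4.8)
The plan is to reduce the risk functional to a single spatial integral of the form treated in Theorem~\ref{Thm:EulerLagrangeEquations} and then read off the field equation directly. First I would use the absolute continuity of $\mathbb{P}^{(X,Y)}$ and $\nu$ with respect to the respective Lebesgue measures to replace $d\mathbb{P}^{(X,Y)}$ by $f_{X,Y}(x,y)\,dx\,dy$ and $d\nu$ by $f_\nu(x)\,dx$. Invoking Fubini's theorem (justified by the integrability assumptions, in particular $\text{Pen}[\nabla g]\in\mathcal{L}(\Omega,\nu)$ together with the integrability of $L$), the fitness term becomes an iterated integral, so that
\[
Q[g] = \int_\Omega \left[ \int_{\mathbb{R}} L(y,g(x))\,f_{X,Y}(x,y)\,dy + \lambda\,\text{Pen}[\nabla g(x)]\,f_\nu(x) \right] dx.
\]
This exhibits $Q$ as $\int_\Omega f(x,g(x),\nabla g(x))\,dx$ with the combined Lagrangian
\[
f(x,s,u) = \int_{\mathbb{R}} L(y,s)\,f_{X,Y}(x,y)\,dy + \lambda\,\text{Pen}[u]\,f_\nu(x),
\]
where $s$ and $u$ play the role of the placeholders for $g(x)$ and $\nabla g(x)$ in the sense of Proposition~\ref{prop:ConditionsELSolutionMinimum}.

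The crucial structural observation is that the fitness part of $f$ depends only on $(x,s)$ while the penalty part depends only on $(x,u)$, so the two arguments decouple. Hence $\tfrac{\partial f}{\partial u} = \lambda\,\tfrac{\partial\text{Pen}}{\partial u}[u]\,f_\nu(x)$, with no contribution from $L$, and $\tfrac{\partial f}{\partial s} = \int_{\mathbb{R}} \tfrac{\partial L}{\partial s}(y,s)\,f_{X,Y}(x,y)\,dy$, with no contribution from the penalty. Substituting these two derivatives into the interior Euler--Lagrange equation $0 = -\nabla\tfrac{\partial f}{\partial u} + \tfrac{\partial f}{\partial s}$ and into the natural boundary condition $\tfrac{\partial f}{\partial u}\cdot\hat n = 0$ supplied by Theorem~\ref{Thm:EulerLagrangeEquations} yields precisely Eq.~\ref{Eq:FirstOrderRiskFunctionalFieldEqs} and the stated normal-derivative boundary condition, after factoring out the constant $\lambda$ (and using $f_\nu\neq 0$ on the boundary).

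I expect the main obstacle to be the justification of moving the derivative $\tfrac{\partial}{\partial s}$ inside the $y$-integral, i.e. establishing $\tfrac{\partial}{\partial s}\int_{\mathbb{R}} L(y,s)\,f_{X,Y}(x,y)\,dy = \int_{\mathbb{R}} \tfrac{\partial L}{\partial s}(y,s)\,f_{X,Y}(x,y)\,dy$, and, relatedly, verifying that the combined integrand $f$ itself satisfies hypotheses (A1)--(A5) so that Theorem~\ref{Thm:EulerLagrangeEquations} is applicable. The interchange follows from a dominated-convergence argument once a growth bound of the type in condition~(i) of Proposition~\ref{prop:ConditionsELSolutionMinimum} is assumed for $L$ uniformly enough in the $y$-variable to produce an integrable dominating function; this is exactly where assumptions (A1)--(A5) enter. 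One should additionally check the Carath\'eodory-type measurability of $f$ in $x$ and confirm that convexity of $f$ in $(s,u)$ is inherited from convexity of $L$ in $s$ and of $\text{Pen}$ in $u$ (they act on disjoint arguments, so joint convexity is immediate). This last point is what upgrades the stationary solution from a mere critical point to a genuine minimizer via Proposition~\ref{prop:ConditionsELSolutionMinimum}, matching the proposition's claim that $Q$ is \emph{minimized} by the solution. The remaining manipulations are routine once these regularity items are secured.
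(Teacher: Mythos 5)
Your proposal is correct and follows essentially the same route as the paper: the paper's proof likewise rewrites $Q[g]$ as $\int_\Omega f(x,g(x),\nabla g(x))\,dx$ with the combined Lagrangian $f(x,s,u)=\int L(y,s)\,f_{X,Y}(x,y)\,dy+\lambda\,\text{Pen}(u)\,f_\nu(x)$ and then invokes Theorem~\ref{Thm:EulerLagrangeEquations} with the natural boundary conditions. The only difference is one of detail: the paper imposes assumptions (A1)--(A4) directly on this combined integrand $f$ rather than on $L$ and $\text{Pen}$ separately, so the regularity checks you flag (Fubini, differentiating under the $y$-integral, joint convexity via the decoupled arguments) are precisely the steps the paper's terse proof leaves implicit.
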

\begin{proof}
We write $Q$ as
\begin{align}
Q[g] =& \int f(x, g(x), \nabla g(x)) dx, \ \text{where}
\\
f(x, s, u) =& \int L(y,s) f_{X,Y}(x,y) dy + \lambda \text{Pen}(u) f_{\nu}(x).
\end{align}
Then the minimizer of $Q$ is given as the solution to the Euler-Lagrange equations of the system, see Theorem.~\ref{Thm:EulerLagrangeEquations}. Since no boundary conditions are imposed a priori, the natural boundary conditions are adopted.
\end{proof}
\textbf{Remarks:}
\begin{itemize}
\item[a)] We note that the field equations in Eq.~\ref{Eq:FirstOrderRiskFunctionalFieldEqs} are the Euler-Lagrange equations of the first-order risk functional. The difference to usual use-cases of the Euler-Lagrange equations, e.g. in physics, is that the integrals here are measure integrals and, as a consequence, the respective Lebesgue-densities enter the equation. 
\item[b)] We take $g(0) = \mathbb{E}(Y|X = 0)$ as auxiliary boundary condition when necessary. The necessity occurs e.g. in the limiting case $\lambda \rightarrow \infty$. This particular choice will be motivated in the following example of the squared-$\ell^2$ penalized MSE-risk. It would also be possible to impose boundary conditions such as $g = \mathbb{E}(Y|X = \cdot)$ on $\partial \Omega$ a priori. Imposing no boundary conditions a priori is an explicit choice that we make.
\end{itemize}
Now we explicitly give the field equations for the squared-$\ell^2$ penalized MSE-risk.
\subsection{Field Equations of the squared-$\ell^2$ Penalized MSE-Risk}
\label{FieldEqs:squared-ell2}
\noindent
Let $\Omega = [0,1]^p, L(x,y) = (x - y)^2$ and $\text{Pen}(w) = ||w||_2^2$ and $\nu = \mathbb{P}^X$. Then the risk functional is given as
\begin{align*}
Q[g] =& \mathbb{E}[(Y - g(X))^2] + \lambda \mathbb{E}(||\nabla g(X)||_2^2)
\\
=& \int (y - g(x))^2 \, d\mathbb{P}^{(Y,X)}(x,y) + \lambda \int ||\nabla g(x)||_2^2 \, d\mathbb{P}^{X}(x)
\\
=& \int (y - g(x))^2 f_{Y,X}(y,x) dxdy + \lambda \int ||\nabla g(x)||_2^2 f_X(x) dx.
\end{align*}
This is a first-order risk functional. There is a solution to the minimization problem $\min_{g \in W^{2,2}} Q[g]$ and the function $g:\Omega \rightarrow \mathbb{R}$ that minimizes the risk functional on $\Omega$ is given as the solution to the following Neumann boundary problem,
\begin{align*}
\begin{cases}
- \lambda \Delta g(x) - \lambda [\nabla \log(f_X(x)) \cdot \nabla g(x)] + g(x) = \mathbb{E}[Y|X = x] & x \in \Omega,
\\
\frac{\partial g(x)}{\partial x_i}\big|_{x_i = 0} = \frac{\partial g(x)}{\partial x_i}\big|_{x_i = 1} = 0. & 
\end{cases}
\end{align*}
Again, we additionally impose $g(0) = \mathbb{E}(Y|X = 0)$, when necessary as in e.g. Remark c) below.\\
\textbf{Remarks:}
\begin{itemize}
\item[a)] The conditions from \ref{App:AssumptionsEulerLagrange} have to be checked. We recognize that
\begin{align*}
f(x,s,u) = \int (y - s)^2f_{X,Y}(x,y) dy + \lambda ||u||_2^2 f_X(x).
\end{align*}
Then: 1) $f(x,s,u) = \int (s - y)^2 f_{X,Y}(x,y)dy + \lambda ||u||_2^2$ is a Carath\'{e}odory integrand since $f$ is measurable and continuous in all arguments; 2) $f$ is continuously differentiable in $s$ and $u$ since $f$ is quadratic in both arguments; 3) $|\partial_v f(x,s,u)| = |2 \int (s - y)f_{X,Y}(x,y)dy| \leq c_1 |s|$ and $|\nabla_u f(x,s,u)| = \lambda 2 |u| \leq c_2 |u|$; 4) $f$ is convex in $(s,u)$. So the assumptions are fulfilled and the solution to the field equation is a minimum of the risk functional.
\item[b)] The field equation is a second-order, inhomogeneous, linear PDE. The inhomogeneity is the conditional expecation and the coefficient function for $\nabla g$ is the score of the joint distribution of the features, $\nabla \log(f_{X}(x))$. The PDE may be used for post regularization, i.e. having an unbiased estimator for $\mathbb{E}(Y|X = \,\cdot\,)$ which suffers from high variance, it may be used as plug-in estimator in the field equations. Then the solution to the field equation, $\tilde{g}$, will be regularized.
\item[c)] For $\lambda = 0$, we obtain $g(x) = \mathbb{E}(Y|X = x)$, as can be expected. For $\lambda \rightarrow \infty$, the equation simplifies to 
\begin{align*}
\Delta g(x) = - \nabla \log(f_X(x)) \cdot \nabla g(x).
\end{align*}
This equation is trivially solved by $g = C, \ C \in \mathbb{R}$, whereby the Neumann boundary condition is also automatically fulfilled. To determine the constant $C$, we use the auxiliary condition that $g(0) = \mathbb{E}(Y|X  = 0)$. Then $g(x) =  \mathbb{E}(Y|X  = 0), \ \forall x \in \Omega$.\\
For $f_X$ being the independent uniform distribution on $\Omega$, $f_X(x) = \frac{1}{\lambda(\Omega)} = 1$, we obtain
\begin{align*}
\mathbb{E}(Y|X = x) = - \lambda \Delta g(x) + g(x).
\end{align*}
This reflects the fact that if the distribution of $X$ is unstructured, it does not contribute to the PDE and consequently does not influence the minimum of the risk. Under this assumption and the $\ell^2$-penalty (instead of the squared-$\ell^2$-penalty) this equation is the equivalent to the Rudin-Osher-Fatemi differential equation, cf. \cite{rudin1992nonlinear}. The equation for the squared-$\ell^2$-penalty and $f_X$ being the uniform distribution is also given in \cite{ding2024semi}.
\item[d)] The one-dimensional version of the equation is given as
\begin{align}
\begin{cases}
\mathbb{E}(Y|X = x) = -\lambda g''(x) - \lambda (\log(f_X(x)))' g'(x) + g(x) & x \in \Omega
\\
g'(0) = g'(1) = 0 & 
\end{cases}
\label{OneDimFieldEq}
\end{align}
This equation is closely related to the Hermite differential equation and may be solved by a power-series ansatz.
For the special case where additionally $f_X(x) = 1$, the resulting ODE can be solved explicitly for several forms of $\mathbb{E}(Y|X = x)$. To illustrate the mechanics of the ODE, we set $\mathbb{E}(Y|X = x) = \cos(\omega x)$. Then it is easy to show that the solution, $\tilde{g}$, is given as
\begin{align*}
\tilde{g}(x) = A \sinh\left(\frac{1}{\sqrt{\lambda}} x\right) + \frac{1}{1 + \lambda \omega^2} \cos(\omega x).
\end{align*}
with $A = \frac{\sqrt{\lambda} \omega \sin(\omega)}{2 \sinh\left(\frac{1}{\sqrt{\lambda}}\right)(1 + \lambda \omega^2)}$. Notice that $\lim_{\lambda \rightarrow 0} \tilde{g}(x) = \cos(\omega x) \ \forall x \in [0,1]$ with rate $\sqrt{\lambda}$ and $\lim_{\lambda \rightarrow \infty} \tilde{g}(x) = 0 \ \forall x \in [0,1]$ with rate $\sqrt{\lambda}$. Interestingly, for $\lambda \rightarrow \infty$, the first term dominates.
\end{itemize}
\section{Discussion}
\label{Discussion}
%
\subsection{Overview}
\label{Dicussion:Overview}
\noindent
Penalized risks in non-parametric regression settings are analyzed by means of variational calculus. When the risk is comprised of a fitness-term and a gradient-based penalty, under suitable regularity conditions, the minimum of the risk can be found as the solution to the Euler-Lagrange equations associated to the risk. The fitness-term analyzed in this paper is the mean-squared error (MSE) risk, $L[g] = \mathbb{E}((Y - g(X)^2)$. $L$ is a quadratic functional and its unique minimum is $\tilde{g}(x) = \mathbb{E}(Y|X = x)$, cf. Sect. 1.5.5 from \cite{bishop2006pattern} or the motivation sections of this paper, Sect.~\ref{Introduction:Motivation}. Adding a penalty term to the MSE-risk complicates finding the minimum of the associated risk functional. The penalty terms considered in this paper are gradient-based penalties. Especially interesting is the case where the penalty is
of the form $\text{Pen}[g] = \int ||\nabla g(x)||_{r}^s d\nu(x)$. When $s=r=2$ we may call the penalty squared-$\ell^2$-penalty. From a statistical point of view such penalty decreases the variance of an estimator $\hat{g}$ when it is used in its training. For a linear $g$, e.g. $g = \beta_0 + \sum_{i = 1}^p \beta_i x_i$, the squared-$\ell^2$ penalty is $\sum_{i = 1}^p \beta_i^2$, which corresponds to the classical ridge penalty, cf. Sect. 3.4.1 from \cite{hastie2017elements} or Sect.~\ref{Basics:PenalizedRegression:Linear} of this paper. 
From a calculus viewpoint, the squared-$\ell^2$-penalty is a Dirichlet functional if $\nu$ is the Lebesgue-measure, cf. example 2.8 from \cite{rindler2018calculus}. It is well known that the minimum of the Dirichlet functional is the solution to the Laplace equation. This fact alone motivates the realization that the minimum of a risk comprised of the MSE and a gradient-based penalty is subject to a second order partial differential equation. In fact, the differential equation for the squared-$\ell^2$-penalized MSE is derived as a linear, inhomogeneous second order PDE in Sect.~\ref{FieldEqs:squared-ell2}. In addition to the derivation for the squared-$\ell^2$ penalized MSE-risk, we derive the field equations for first-order risk functionals, as a direct consequence of the Euler-Lagrange equations. These field equations may be used for post-processing of estimators. Given a non-parametric estimator for $\mathbb{E}(Y|X =\,\cdot\,)$ with a high variance, the variance can be reduced by using the estimator as a plug-in estimator for the conditional expectation in the field equation and (numerically) solving the field equation for $\tilde{g}$ and then taking $\tilde{g}$ as the updated estimator.
%
\subsection{Analysis of other penalization and regularization methods}
\label{Discussion:OtherPenalties}
\noindent
While we tried to render the results in this paper as general as possible, there are penalization/regularization methods where it is not clear how they may be analyzed using this framework.
First and foremost, regularization methods which are not representable by a penalty have to be mentioned. Examples are dropout regularization for MLPs, cf. \cite{hinton2012improving} or best subset selection, cf. \cite{hastie2017elements}, Sect.~3.3.1. For such regularization methods there is almost no hope to formulate the associated effective risk as first (or any order) functional.\\
Next, there are regularization methods which are based on a penalty, but the penalty can not be formulated in terms of the gradient. E.g., \cite{follain2024nonparametric} use an $\ell^2$-type loss for Hermite polynomials but they add a coefficient to each part of the Hermite polynomial derivative series. These coefficients hinder formulating Euler-Lagrange equations for this particular risk. Also the penalty from \cite{rosasco2013nonparametric} is hard to analyze because of the restriction to a RKHS. In principle, it could be ignored that all functions are from a certain RKHS, but then the minimum of the risk functional could not be included in the RKHS.\\
Last, we consider gradient-based penalties, which can not be analyzed because of non-differentiability.
An obvious example is $\ell^1$-based penalization,
\begin{align}
\text{Pen}[g] = \int ||\nabla g(x)||_1 d\nu(x).
\label{Eq:ell1Penalty}
\end{align}
At the current moment it is not clear how to deal with such set-ups because standard variational calculus methods are not applicable. Nonetheless, there is some hope because at least for the special case where $\nu = \lambda$, the minimum of the functional in Eq.~\ref{Eq:ell1Penalty} can be seen to be given by monotonic fields.\footnote{By 'monotonic field' we mean a field $g$ which is subject to either $\frac{\partial g(x)}{\partial x_i} \geq 0$ or $\frac{\partial g(x)}{\partial x_i} \leq 0$ for all $x \in \Omega$ and all $i \in \{1,\hdots,p\}$. This result may be of independent interest and will be presented in forthcoming works.} However, it is not clear yet what the minimum of the functional in conjunction with a fitness term will be. \\
Next to $\ell^1$-penalization, another interesting research question would be to analyze a generalization of the adaptive LASSO from \cite{zou2006adaptive}. A non-parametric generalization of the adaptive LASSO penalty can be conceived as
\begin{align}
\text{Pen}[g] = 
\int \sum_{i = 1}^p \frac{|\partial_i g(x)|}{\partial_i \mathbb{E}(Y|X = x)} d\nu(x),
\end{align}
The interesting question here would be if the oracle properties, cf. \cite{zou2006adaptive} Sect. 3.2, translate to the non-parametric case and if they can be uncovered in the field equations.
\subsection{Convergence to the minimum of the risk functional and universal approximation}
\label{Discussion:Overview}
\noindent
Notice that throughout this paper, we implicitly assumed that a statistical learner, when trained w.r.t an empirical risk, learns the minimum of the risk. Currently this statement is merely a conjecture. To clarify this, we first would like to make a distinction between learners for which a universal approximation theorem holds and learners for which there is no universal approximation theorem. An example for a universal learner is a feedforward MLP, cf. \cite{hornik1991approximation}. If the solution to the Euler-Lagrange equation lies in the function space which the learner can universally approximate, then we presume that the learner converges to the minimum of the Euler-Lagrange equation for the MSE with gradient-based penalization. However, for now, this is just a presumption, since the universal approximatiom theorem is a deterministic statement. It does not follow directly that the estimation is consistent.\footnote{For some risks and learners there are consistency results available in the literature.\cite{schmidt2020nonparametric} show that ReLU-networks are consistent under certain assumptions (and additionally the author derived the convergence rates) and \cite{kohler2021rate} derive the convergence rates for fully connected MLPs.}
Now if the function space of the learner is restricted and hence the learner is not subject to a universal approximation theorem on a general function space in which the solution of the Euler-Lagrange equation lies in, then the learner will not converge to the global minimum of the risk. Instead, we presume that the learner converges to the minimum of the risk functional over the restricted function space. The two presumptions are not trivial and have to be proven. This is subject to future research. A possible path would be to first establish the uniform convergence of the risks as functions of parameters and then generalize to uniform convergence of functionals.

\section*{Acknowledgement}
\noindent
This research was (partially) funded in the course of TRR 391 \textit{Spatio-temporal Statistics for the Transition of Energy and Transport} (520388526) by the Deutsche Forschungsgemeinschaft (DFG, German Research Foundation).

\clearpage
\bibliography{bibi}
\clearpage
\appendix
\section{Assumptions for the Euler-Lagrange Equations}
\label{App:AssumptionsEulerLagrange}
\begin{definition}[Assumptions]
\label{Def:GoodFirstOrderRiskFunctional}
For a functional $Q: W^{k,q} \rightarrow \mathbb{R}$, with
\begin{align}
Q[g] = \int L[y; g(x)]d\mathbb{P}^{(X,Y)}  + \lambda \int \text{Pen}[\nabla g] d\nu, 
\end{align}
which we may rewrite as
\begin{align}
Q[g] =& \int f(x, g(x), \nabla g(x)) dx, \ \text{where}
\\
f(x, s, u) =& \int L(y,s) f_{X,Y}(x,y) dy + \lambda \text{Pen}(u) f_{\nu}(x), 
\end{align}
We define the following 
assumptions:
\begin{itemize}
\item[A1)] $f$ is a Carath\'eodory integrand: The mapping $x \mapsto f(x,s,u)$ is Lebesgue measurable $\forall s \in \mathbb{R}, u \in \mathbb{R}^p$ and the mapping $(s,u) \mapsto f(x,s,u)$ is continuous $\lambda$ almost everywhere.
\item[A2)] $f$ is continuously differentiable in its second and third argument. 
\item[A3)] $f$ fulfills the following growth bounds: $\forall x \in \Omega, s \in \mathbb{R}, u \in \mathbb{R}^p$ there is $M > 0$ and $q \geq 1$ s.t. $|\partial_s f(x, s, u)| \leq M (1 + |s|^{q-1} + |u|^{q-1})$ and $|\nabla_u f(x, s, u)| \leq M (1 + |s|^{q-1} + |u|^{q-1})$
\item[A4)] $f(x,s,\cdot)$ is quasiconvex $\forall (x,s) \in \mathbb{R}, \mathbb{R}$
\end{itemize}
We note that the assumptions are directly taken from Theorem 3.1 and proposition 3.3 from \cite{rindler2018calculus}.
\end{definition}

\end{document}